\documentclass{article}
\usepackage[a4paper, total={5.9in, 8.4in}]{geometry}
\usepackage{asymptote} 
\usepackage{enumitem}
\usepackage{fancyhdr}
\usepackage{xparse}
\usepackage{amsmath}
\usepackage{amssymb}
\usepackage{amsthm}
\usepackage{hyperref}
\usepackage[nameinlink]{cleveref}
\usepackage{listings}
\usepackage{bbm}
\usepackage{tikz}
\usepackage{xcolor}
\usepackage{float}
\usepackage[sc,medium,center]{titlesec}
\usepackage[normalem]{ulem}

\usetikzlibrary{patterns}

\newtheoremstyle{mydefstyle}{3pt}{3pt}{\normalfont}{}{\bfseries}{.}{0.5em}{}
\theoremstyle{mydefstyle}
\newtheorem{theorem}{Theorem}
\newtheorem*{remark}{Remark}
\newtheorem*{definition}{Definition}
\newtheorem{lemma}[theorem]{Lemma}
\newtheorem{corollary}[theorem]{Corollary}
\newtheorem*{lemma*}{Lemma}
\newtheorem*{theorem*}{Theorem}

\newcommand{\zz}[0]{\mathbb Z}
\begin{document}
\begin{center}
{\Large\bf Collinear Interior Lattice Points in Triangles Satisfying $B(T)\in\{4,5\}$}\\
\vskip 20pt
{\bf Jonathan Sakunkoo}\\
{\small\itshape Mathematical Institute, University of Oxford, Oxford, UK}\\
{\tt jonathan.sakunkoo@cs.ox.ac.uk}\\ 
\vskip 10pt
{\bf Annabella Sakunkoo}\\
{\small\itshape Stanford University Online High School, Redwood City, CA, USA}\\ 
{\tt apianist@ohs.stanford.edu}\\ 
\vskip 10pt
{\bf Dana Paquin}\\
{\small\itshape Stanford University Online High School, Redwood City, CA, USA}\\
{\small\itshape Department of Mathematics, California Polytechnic State University, San Luis Obispo, CA, USA}\\ 
{\tt dpaquin@stanford.edu}\\ 
\end{center}
\date{}

\thispagestyle{fancy}

\pagestyle{fancy}
\fancyhead{}
\fancyhead[R]{\thepage}
\fancyhead[L]{Sakunkoo et al.}
\fancyfoot{}


\begin{abstract}
A positive integer $k$ is called $Bn$-collinear if at least one lattice triangle with $n$ boundary points ($B(T)=n$) and $k$ interior points exists, and every such triangle has all of its interior points collinear. Building on prior work on $B(T)=3$, we completely classify the $B4$- and $B5$-collinear integers. Using canonical lattice representations together with arithmetic properties of Alder’s generalized totient function $g(k)$, we prove that the only $B4$-collinear integers are $k \in \{1, 2, 5\}$. Furthermore, we show that \textit{no} integer is $B5$-collinear. This establishes a structural contrast: while three and four boundary lattice points exhibit some collinearity constraints, five boundary points disrupt the pattern. 

\end{abstract}

\section{Introduction}



The study of lattice point geometry lies at a rich intersection of geometry, number theory, and combinatorics. By constraining geometric shapes to the discrete lattice $\mathbb{Z}^2$, continuous concepts like area transform into counting problems. Classical results such as Pick’s Theorem\cite{pick1899geometrisches} relate the area of a lattice polygon to its numbers of boundary and interior lattice points. While the total count of interior lattice points has been extensively studied, considerably less is known about the geometric configurations of the interior lattice points. 

Collinearity is a fundamental geometric constraint on a configuration of points. Determining when combinatorial data, such as the numbers of boundary and interior lattice points, forces such rigidity is a fascinating problem in discrete geometry. We investigate when the interior lattice points of a lattice triangle are necessarily collinear.
Li--Paquin\cite{li2025lattice} showed that, for lattice triangles with three boundary lattice points, the interior points are forced to be collinear exactly when their number is 1, 2, 4, or 7. This result raises the natural problem of classifying the integers for larger boundary counts. 

In this paper, we completely resolve the next two cases of four and five boundary lattice points, B(T)=4 and B(T)=5, and show that the behavior changes sharply between them. We prove that for a triangle with four boundary points, the interior points are necessarily collinear exactly when there are 1, 2, or 5 of them; for five boundary lattice points, however, no number of interior points forces collinearity. These results suggest that the interaction between boundary structure and interior-point geometry may be more subtle than previously understood.

This paper is organized as follows. In Section 2, we introduce a canonical framework for lattice triangles with four boundary lattice points and establish the geometric facts underlying the classification. In Section 3, we develop a non-collinearity criterion, introduce the required arithmetic tools, including a special case of Alder’s generalized totient function, and classify the interior-point counts that force collinearity when there are four boundary lattice points. Section 4 assembles these results into the proof of the main theorem for four boundary lattice points. Finally, in Section 5, we turn to five boundary lattice points and prove that no number of interior lattice points forces collinearity.

\begin{definition}
A \textit{lattice polygon} is a polygon whose vertices have integral coordinates, i.e., are all in $\zz^2$. 
\end{definition}

\begin{theorem*}[Pick's Theorem]\cite{pick1899geometrisches}
For any simple lattice polygon with $b$ boundary points and $k$ interior points, its area is \[k+\frac b2-1.\] 
\end{theorem*}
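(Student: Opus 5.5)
We prove Pick's Theorem by showing that the quantity $P(Q)=k(Q)+\tfrac12 b(Q)-1$ is additive and agrees with area on a generating class of polygons. The first step is an additivity lemma. Suppose a simple lattice polygon $Q$ is cut by a lattice segment (a diagonal, or more generally a lattice path through the interior) into two simple lattice polygons $Q_1,Q_2$. Suppose the common boundary contains $c\ge 2$ lattice points, counting its two endpoints. The $c-2$ points strictly inside the cut become interior points of $Q$. The two endpoints are counted in both $b(Q_1)$ and $b(Q_2)$ but only once in $b(Q)$. Hence
\[
k(Q)=k(Q_1)+k(Q_2)+(c-2),\qquad b(Q)=b(Q_1)+b(Q_2)-2(c-2)-2 .
\]
Substituting gives $P(Q)=P(Q_1)+P(Q_2)$. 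Area is also additive, so it suffices to verify the formula on pieces into which every polygon decomposes.

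Next, every simple polygon admits a triangulation by diagonals (any simple polygon with at least four vertices has an interior diagonal, and we induct on the number of vertices). The diagonals join vertices, so all pieces are lattice triangles. By additivity it therefore suffices to prove $P(T)=\operatorname{Area}(T)$ for every lattice triangle $T$.

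For triangles we work upward from rectangles.
\begin{enumerate}
\item For an axis-parallel $a\times c$ lattice rectangle, direct counting gives $k=(a-1)(c-1)$ and $b=2a+2c$, so $P=ac$.
\item For a right triangle with legs $a,c$ along the axes, let $d$ be the number of lattice points strictly inside the hypotenuse. Cutting the rectangle along its diagonal gives two congruent triangles, so additivity yields $2P(T)=ac$, as required.
\item A general lattice triangle sits in its bounding axis-parallel rectangle. The rectangle minus the triangle is a union of at most three right triangles, possibly together with one small rectangle. Additivity, applied in its subtractive form $P(T)=P(R)-\sum P(\text{pieces})$, then gives the formula. The same holds for area, and the two agree on each piece by the previous steps.
\end{enumerate}

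The main obstacle is bookkeeping rather than ideas. The additivity lemma must be stated so that it covers cuts along segments containing intermediate lattice points, and it must apply when pieces are removed from the rectangle, not only when a polygon is split. Degenerate configurations also need care, such as a triangle having a horizontal or vertical side, which reduces the number of complementary pieces. I would handle this by proving additivity in full generality first (for any two lattice polygons meeting along a common boundary path), so that every later case is a one-line consequence.
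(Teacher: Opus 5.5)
The paper gives no proof of Pick's Theorem. It cites Pick (1899) and uses the formula as a black box, and only for lattice triangles, to pin down the height of the free vertex: $b=2k+2$ when $B(T)=4$ and $b=2k+3$ when $B(T)=5$. So there is no paper route to compare against. Your proposal supplies a self-contained proof, and it is correct; it is the standard textbook argument.

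The individual steps check out:
\begin{itemize}
\item The additivity identities $k(Q)=k(Q_1)+k(Q_2)+(c-2)$ and $b(Q)=b(Q_1)+b(Q_2)-2(c-2)-2$ are right.
\item The rectangle count gives $P=ac$.
\item Your description of the complement of a triangle in its bounding box is right. By pigeonhole, some vertex sits at a corner. Then either the opposite corner is also a vertex, giving one right triangle across the diagonal plus two right triangles and a rectangle cut off by the horizontal and vertical lines through the third vertex. Or the other two vertices lie on the two far sides, giving three right triangles.
\end{itemize}

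Three small things to tighten:
\begin{itemize}
\item In step 2, what gives the two halves equal $P$ is that the point reflection $(x,y)\mapsto(a-x,c-y)$ maps $\zz^2$ to itself. Congruence as such is not enough.
\item The quantity $d$ is never used and can be dropped.
\item The ``subtractive'' step is cleanest written as $P(R)=P(T)+\sum P(\text{pieces})$. Obtain it by successively cutting $R$ along lattice segments or two-segment lattice paths, each covered by your lemma, and then compare with the same identity for area.
\end{itemize}

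As for what each route buys: citing keeps the paper focused on its new results, while your argument makes it self-contained. Since the paper only needs the triangle case, steps 1--3 alone would suffice for its purposes. You could then drop the triangulation step, which is the one place you lean on a nontrivial outside fact (existence of an interior diagonal).
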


\begin{definition}
If for all lattice triangles with exactly $n$ boundary points and $k$ interior points, the triangle's $k$ interior points are all collinear, then such a positive integer $k$ is called \textit{$Bn$-collinear}, provided such a triangle exists. 
\end{definition}

Li--Paquin\cite{li2025lattice} studied $B3$-collinear integers and showed that the only $B3$-collinear integers are $1$, $2$, $4$, and $7$. A natural next step after $3$ boundary points is to consider lattice triangles with $4$ boundary points: $B4$-collinear integers. The following theorems are our main results: 

\begin{theorem}
\label{main:label}
The only $B4$-collinear integers are $1$, $2$, and $5$. 
\end{theorem}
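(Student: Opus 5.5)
We will put every $B(T)=4$ triangle in a canonical form and then reduce collinearity to an arithmetic condition on one parameter. If $B(T)=4$, exactly one edge carries one extra lattice point, so its lattice length is $2$, and the other two edges are primitive. A unimodular map sends the vertices to $(0,0)$, $(2,0)$, $(p,q)$ with $q>0$, and the primitivity conditions become $\gcd(p,q)=\gcd(p-2,q)=1$. Pick's Theorem gives area $q=k+2-1$, so $q=k+1$. The shear $(x,y)\mapsto(x+my,y)$ fixes the base, so $p$ matters only modulo $q$; the reflection $x\mapsto 2-x$ identifies $p$ with $2-p$. Hence the triangles with $k$ interior points correspond to residues $p \bmod q$ with $p$ and $p-2$ both units. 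Their number is Alder's totient $g(q)=\prod_{\ell^e\| q}\ell^{e-1}(\ell-2)$ for odd primes $\ell$, with factor $2^{e-1}$ at $\ell=2$.

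Existence is automatic, since $p=1$ is always admissible. For $p=1$ the triangle $(0,0),(2,0),(1,q)$ is symmetric about $x=1$. In row $y$ the interior points satisfy $y/q<x<2-y/q$, which forces $x=1$, so all $q-1=k$ interior points lie on the line $x=1$. Thus $k$ is $B4$-collinear if and only if every admissible $p\not\equiv 1 \pmod q$ yields collinear interior points.

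First we check when $p\equiv 1$ is the only admissible residue, i.e.\ $g(q)=1$. By multiplicativity this holds exactly when $q\in\{1,2,3,6\}$: any prime $\ell\ge5$ gives a factor $\ge3$, while $9\mid q$ or $4\mid q$ gives a factor $\ge 3$ or $2$ respectively. So $k\in\{1,2,5\}$ are $B4$-collinear ($q=1$ gives $k=0$).

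For every other $k$ we will exhibit a non-collinear triangle. The cleanest plan is a non-collinearity criterion: if an admissible $p$ with $p\not\equiv 1$ exists, choose a representative with $2\le p\le q$ (using the shear and the reflection $p\leftrightarrow 2-p$). Then locate three interior points explicitly. Row $y$ contains the lattice points with
\[
\frac{py}{q}<x<\frac{py}{q}+2\Bigl(1-\frac yq\Bigr).
\]
For small $y$ this interval has length nearly $2$, so it contains one or two points. As $y$ increases, their abscissae drift with slope $p/q\notin\{0\}$ modulo the drift of the line $x=1$. I would take the point(s) in row $1$, a point in row $2$, and a point in a suitably chosen later row (e.g.\ near $y=\lceil q/p\rceil$ or the row where the integer part of $py/q$ jumps). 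The goal is to show their determinant is nonzero, with a case split on whether $p=2$ is allowed (only when $q$ is odd) and on small $q$ handled directly.

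The main obstacle is making this non-collinearity argument uniform in $p$ and $q$. An alternative I would try first is a counting argument. If all $k=q-1$ interior points lay on one line, at most one per row (distinct rows are needed unless the line is horizontal, which fails once two rows are populated), then every row $1\le y\le q-1$ would contain exactly one point, namely $x_y=\lceil py/q\rceil$ or similar. Lying on a line forces $x_y$ to be an affine function of $y$ with rational slope $s$ and integer values at all $q-1$ consecutive rows, so $s\in\mathbb Z$. Comparing with $py/q$ over the full range $1\le y\le q-1$ then forces $p\equiv 1 \pmod q$ (up to the reflection), a contradiction. Some rows might contain two points, but then collinearity forces a horizontal line and $k\le 2$, already covered.
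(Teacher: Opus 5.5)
Your counting argument (the "alternative") is correct and takes a genuinely different route from the paper. The three-point determinant plan is unnecessary; you can drop it.

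\textbf{The step you left unproved.} The only step you merely assert is the last one ("comparing with $py/q$ \dots forces $p\equiv 1$"). It closes in one line, with no slope analysis. Row $y=q-1$ of the open triangle is the interval $\bigl(\frac{p(q-1)}{q},\frac{p(q-1)+2}{q}\bigr)$. It contains an integer $x$ if and only if $qx=p(q-1)+1$, i.e.\ if and only if $q\mid p-1$. For $k\ge 3$ the line is not horizontal, so it meets each of the $q-1$ rows at most once. Your pigeonhole step therefore puts one of the $q-1$ interior points in row $q-1$, which forces $p\equiv 1\pmod q$. So for $k\ge 3$ you in fact prove more: the interior points are collinear if and only if the triangle is unimodularly equivalent to $(0,0),(2,0),(1,k+1)$. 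Combined with $g(q)=1\iff q\in\{1,2,3,6\}$, this gives the theorem.

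\textbf{How the paper argues.} The paper normalizes a \emph{primitive} edge to $(0,0),(1,0)$, with apex $(a,2k+2)$ subject to $\{\gcd(a,b),\gcd(a-1,b)\}=\{1,2\}$. Its non-collinearity certificate is the line $x=1$, which carries $\lfloor (2k+1)/a\rfloor$ interior points, between $2$ and $k-1$ when $3\le a\le k$. That certificate needs $a$ in a window, not merely in a nontrivial residue class. So the paper must count the valid $a\in[3,k]$, which it does through the substitutions $a=2r_1+1$ and $r_2=k+1-a/2$ and some endpoint bookkeeping, arriving at $g(k+1)-1$. It then settles $k=5$ by a separate mod-$12$ case analysis using medians, and $k=3$ by an explicit figure.

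\textbf{What each route buys.} Putting the length-$2$ edge as the base makes the admissible apexes exactly the residues counted by $g(q)$. Existence, the cases $k\in\{1,2,5\}$ (where only the symmetric class exists), and the small cases $k=3,4$ then all come out uniformly. You also get the structural characterization above. In exchange, the paper's route produces an explicit witness triangle with a concrete certificate (two interior points on $x=1$ and one off it), whereas yours is a uniform proof by contradiction.
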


\begin{theorem}
\label{main5:label}
There are no $B5$-collinear integers. 
\end{theorem}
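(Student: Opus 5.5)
The plan is to show that for every positive integer $k$, either no lattice triangle with $B(T)=5$ and $I(T)=k$ exists, or some such triangle has non-collinear interior points. Note that $k=1$ cannot simply be declared non-collinear, since a single interior point is trivially collinear. So $k$ with no admissible triangle must be excluded by an existence argument, and every other $k$ needs an explicit non-collinear witness.

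\textbf{Step 1: the edge data.} For a lattice triangle, $B(T)$ is the sum of the gcds of the three edge vectors. With $B(T)=5$ the multiset of edge gcds is either $\{1,1,3\}$ or $\{1,2,2\}$. I will rule out $\{1,2,2\}$ by parity. Suppose two edges with gcd $2$ meet at a vertex $A$. The corresponding edge vectors are both in $2\zz^2$, so their difference, the third edge vector, is also in $2\zz^2$. Then its gcd is even, contradicting gcd $1$. So every $B5$ triangle has one edge containing exactly two interior lattice points and two primitive edges.

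\textbf{Step 2: which $k$ occur.} Apply a unimodular affine map to put the gcd-$3$ edge as $(0,0),(3,0)$ and the third vertex at $(a,h)$ with $h\ge 1$. The primitivity conditions become $\gcd(a,h)=\gcd(a-3,h)=1$. The area is $3h/2$, while Pick's Theorem gives area $k+\tfrac52-1=k+\tfrac32$. Hence $k=\tfrac{3(h-1)}{2}$. Since $k$ is an integer and $3$ is odd, $h$ is odd, say $h=2m+1$, and $k=3m$. So a $B5$ triangle with $k$ interior points exists only when $3\mid k$. In particular no integer $k\not\equiv 0\pmod 3$ is $B5$-collinear, because such a $k$ fails the existence proviso in the definition. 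This includes $k=1$.

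\textbf{Step 3: construction for $k=3m$.} Take the triangle $(0,0),(3,0),(1,2m+1)$. We have $\gcd(1,2m+1)=1$, and $\gcd(-2,2m+1)=1$ because $2m+1$ is odd. So $B(T)=5$, and $I(T)=3m$ by Step 2. I will then exhibit three non-collinear interior points.
\begin{itemize}
\item On the line $y=1$, the interior is $\tfrac{1}{h}<x<3-\tfrac{2}{h}$ with $h=2m+1$. For $h\ge 3$ this open interval contains $x=1$ and $x=2$.
\item On $y=2$, the interior is $\tfrac{2}{h}<x<3-\tfrac{4}{h}$, which contains $x=1$ once $h\ge 5$.
\end{itemize}
For the remaining case $h=3$ (that is, $k=3$), a direct check shows the interior points are exactly $(1,1),(2,1),(1,2)$. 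In every case $(1,1),(2,1),(1,2)$ are interior and non-collinear, so no multiple of $3$ is $B5$-collinear.

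The main obstacle is Step 1: the argument depends on correctly eliminating the $\{1,2,2\}$ configuration, and on recognizing that the existence requirement in the definition, not a collinearity argument, is what disposes of $k\not\equiv0\pmod 3$. After that, the interior-point verification in Step 3 is routine bookkeeping of the edge inequalities.
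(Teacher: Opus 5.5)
Your proof is correct and follows the same plan as the paper. Parity eliminates the $\{1,2,2\}$ edge pattern, Pick's theorem forces $3\mid k$ so every $k$ with $3\nmid k$ fails the existence proviso, and for each $k=3m$ an explicit $B5$ triangle with non-collinear interior points finishes the argument.

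The differences are only in execution. You normalize the gcd-$3$ edge to $(0,0),(3,0)$, so $k=3(h-1)/2$ falls directly out of the area. You certify your witness $(0,0),(3,0),(1,2m+1)$ with the points $(1,1),(2,1),(1,2)$, which are all interior already when $h=3$, so that separate check is unnecessary. The paper instead uses $(0,0),(1,0),(4,2k+3)$ and counts $\lfloor (k+1)/2\rfloor$ interior points on the line $x=1$.
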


We begin with the $B4$-collinear integers. 

First, for any number of interior points $k$, the triangle with vertices $(0,0)$, $(2,0)$, and $(1,k+1)$ has $4$ boundary points and $k$ interior points, so we need not worry about the existence clause in our definition for $4$ boundary points: it is always satisfied. 

We note that $1$ and $2$ are trivially $B4$-collinear: a singleton set is always collinear, and a set of two points is also always collinear. 

Additionally, the triangles in \Cref{threeandfour:sample} show that $3$ and $4$ are not $B4$-collinear. 

Our proof adapts the canonical-form framework of \cite{li2025lattice}. For the $B(T)=4$ case, we introduce new arithmetic arguments based on Alder's generalized totient function. 




\begin{figure}[H]
	\centering
	\begin{tikzpicture}[scale=0.6]
	\draw[step=1.0,black,very thin] (-2,-1) grid (5,11);
	\draw[black, thick] (0,-1) -- (0,11);
	\draw[black, thick] (-2,0) -- (5,0);
	\draw[black, very thick] (0,0) -- (1,0) -- (3,8) -- (0,0);
	\filldraw[blue] (0,0) circle (3pt) node[anchor=north east]{$(0,0)$};
	\filldraw[blue] (1,0) circle (3pt) node[anchor=north west]{$(1,0)$};
	\filldraw[blue] (3,8) circle (3pt) node[anchor=south east]{$(3,8)$};
	\filldraw[red] (1,1) circle (3pt);
	\filldraw[red] (1,2) circle (3pt);
	\filldraw[red] (2,5) circle (3pt);
	\filldraw[blue] (2,4) circle (3pt) node[anchor=north west]{$(2,4)$};
	\end{tikzpicture}\qquad
	\begin{tikzpicture}[scale=0.6]
	\draw[step=1.0,black,very thin] (-2,-1) grid (5,11);
	\draw[black, thick] (0,-1) -- (0,11);
	\draw[black, thick] (-2,0) -- (5,0);
	\draw[black, very thick] (0,0) -- (1,0) -- (3,10) -- (0,0);
	\filldraw[blue] (0,0) circle (3pt) node[anchor=north east]{$(0,0)$};
	\filldraw[blue] (1,0) circle (3pt) node[anchor=north west]{$(1,0)$};
	\filldraw[blue] (3,10) circle (3pt) node[anchor=south east]{$(3,10)$};
	\filldraw[red] (1,1) circle (3pt);
	\filldraw[red] (1,2) circle (3pt);
	\filldraw[red] (1,3) circle (3pt);
	\filldraw[red] (2,6) circle (3pt);
	\filldraw[blue] (2,5) circle (3pt) node[anchor=north west]{$(2,5)$};
	\end{tikzpicture}
	\caption{Lattice triangles with $4$ boundary points and $\{3,4\}$ interior points, respectively, which are not collinear. Interior points in red, boundary points in blue.}
	\label{threeandfour:sample}
\end{figure}


\section{Canonical Form ($B4$)} \label{canonical_section}



Adapting the canonical form established by Li--Paquin\cite{li2025lattice}, all lattice triangles with exactly $4$ boundary points correspond to a ``canonical'' lattice triangle with vertices at the origin $O=(0,0)$, $(1,0)$, and $(a,b)$ for integers $a,b$. Li--Paquin showed that the canonical transformation preserves, amongst other properties, the collinearity (or lack thereof) of the interior points, the number of interior points, and the number of boundary points. Thus, to show that $k$ is a $B4$-collinear integer, it suffices to show that all canonical lattice triangles with $k$ interior lattice points have all of their interior points collinear (and conversely also). 

\begin{lemma}
A lattice triangle with $4$ boundary points has all of its interior points collinear if and only if its canonical triangle has all of its interior points collinear. 
\end{lemma}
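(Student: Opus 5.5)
The plan is to exhibit the canonical triangle as the image of the original triangle under a lattice-preserving affine map, that is, a map $\phi(x)=Mx+v$ with $M\in GL_2(\zz)$ and $v\in\zz^2$. Such a map is a bijection of $\zz^2$ onto itself that sends lines to lines, so it carries interior and boundary lattice points to interior and boundary lattice points and preserves collinearity in both directions. Because $\phi^{-1}$ has the same form, it is enough to prove one direction of the equivalence.

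\textbf{Constructing the map.} Let $T$ have $4$ boundary points. Then each edge contributes $\gcd$ of its coordinate differences minus one boundary points beyond the vertices, and these contributions sum to $1$. So exactly one edge contains one extra lattice point and the other two edges are primitive. Choose a vertex $P$ that lies on a primitive edge $PQ$, and translate by $-P$ so that $P$ goes to $O$. The translated vector $w=Q-P$ is primitive. By B\'ezout there is a matrix $M\in GL_2(\zz)$ whose first column is $w$; then $M^{-1}$ sends $w$ to $(1,0)$, and it sends the third vertex to some lattice point $(a,b)$. The image is therefore the canonical triangle with vertices $O$, $(1,0)$, $(a,b)$. (One may also normalize $a$ further with a shear $\begin{pmatrix}1&t\\0&1\end{pmatrix}$ and a reflection to make $b>0$, if the paper's canonical form requires this.)

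\textbf{Preservation of properties.} First, $\phi$ is an affine bijection of $\mathbb R^2$, so it maps the triangle onto the image triangle, its interior onto the image's interior, and its boundary onto the image's boundary. Second, $\phi(\zz^2)=\zz^2$ since $\det M=\pm1$, so lattice points correspond exactly. Third, collinear points map to collinear points, and by applying $\phi^{-1}$ the converse holds. It follows that the set of interior lattice points of $T$ is collinear if and only if its image, which is exactly the set of interior lattice points of the canonical triangle, is collinear. We can also cite Li--Paquin for these invariance facts, as the paper already does.

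\textbf{Main obstacle.} The only nonroutine step is ensuring that a primitive edge exists and that it sends the triangle to the specific shape $O,(1,0),(a,b)$. Counting boundary points handles the first part. The completion of a primitive vector to a unimodular basis handles the second.
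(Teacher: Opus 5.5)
Your proposal is correct and follows the same approach as the paper. The paper gives no proof of its own here: it cites Li--Paquin's canonical transformation, which (as Section 5 restates) sends a vertex on a primitive edge to the origin and the other endpoint of that edge to $(1,0)$ while preserving lattice points and collinearity. Your translation followed by a $GL_2(\mathbb{Z})$ map, together with your check of the invariance properties, is a self-contained version of exactly that argument.
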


As an immediate corollary, 
\begin{theorem}
\label{canonical:label}
An integer $k$ is $B4$-collinear if and only if all canonical lattice triangles with $k$ interior points and $4$ interior points have all of their interior points collinear. 
\end{theorem}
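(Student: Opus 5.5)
The statement is a direct consequence of the preceding lemma together with the canonical-form correspondence; the phrase ``$4$ interior points'' in the statement is a typo for ``$4$ boundary points.'' The plan is to prove the two implications separately, using two facts. First, every lattice triangle $T$ with $B(T)=4$ is carried by the canonical transformation of Li--Paquin to a canonical triangle $T'$ with vertices $(0,0)$, $(1,0)$, $(a,b)$, and this transformation preserves both the number of interior points and the number of boundary points. Second, by the lemma, $T$ has collinear interior points if and only if $T'$ does.

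($\Leftarrow$) Suppose every canonical triangle with $k$ interior points and $4$ boundary points has collinear interior points. Let $T$ be any lattice triangle with $B(T)=4$ and $k$ interior points, and let $T'$ be its canonical triangle. Then $T'$ has $k$ interior points and $4$ boundary points, so by hypothesis its interior points are collinear. The lemma then shows that the interior points of $T$ are collinear. The existence clause in the definition of $B4$-collinear holds for every $k$, witnessed by the triangle $(0,0),(2,0),(1,k+1)$ noted earlier. Hence $k$ is $B4$-collinear.

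($\Rightarrow$) Suppose $k$ is $B4$-collinear. A canonical triangle is itself a lattice triangle, so any canonical triangle with $k$ interior points and $4$ boundary points falls under the definition, and its interior points are collinear. This direction needs nothing beyond the definition. The lemma, applied to a canonical triangle whose canonical form is itself, gives the same conclusion.

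The only point needing care is confirming that the canonical transformation preserves the boundary and interior counts. Without that, $T'$ might fall outside the class quantified over in the hypothesis. This preservation is exactly what was quoted from Li--Paquin: the transformation is a composition of unimodular affine maps of $\zz^2$, which biject lattice points and preserve the boundary and interior of a triangle. With that fact in hand there is no real obstacle, and the argument is the short two-direction check above.
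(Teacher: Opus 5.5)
Your proposal is correct and follows the paper's approach: the paper treats this as an immediate corollary of the preceding lemma, together with the Li--Paquin fact that the canonical transformation preserves interior and boundary counts. You spell out the two directions and the existence clause explicitly, and you rightly read ``$4$ interior points'' as a typo for ``$4$ boundary points.''
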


We derive some results about canonical triangles. 

The area of a canonical triangle with free vertex $(a,b)$ and $k$ interior points is exactly half of $b$, as the base is of length $1$ and the height is $b$. (By reflecting across the $x$-axis if necessary, assume $b\ge0$.) By Pick's theorem, we additionally have that the area is equal to $k+\frac42-1=k+1$; it follows that $b=2k+2$. 

\begin{theorem}
\label{seglatpntctr:label}
Let $(x_1,y_1)$ and $(x_2,y_2)$ be distinct lattice points so that $x_1,y_1,x_2,y_2\in\zz$. There are exactly $\gcd(x_1-x_2,y_1-y_2)-1$ lattice points on a line segment from $(x_1,y_1)$ to $(x_2,y_2)$, excluding the endpoints. 
\end{theorem}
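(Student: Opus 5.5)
Write $d=\gcd(x_1-x_2,\,y_1-y_2)$. Since the two points are distinct, $(x_1-x_2,y_1-y_2)\neq(0,0)$, so $d\ge 1$ is well defined. The plan is to parametrize the segment and reduce the counting problem to a divisibility condition. Every point of the segment from $(x_2,y_2)$ to $(x_1,y_1)$ has the form
\[
P(t)=(x_2,y_2)+t\,(\Delta x,\Delta y),\qquad t\in[0,1],
\]
where $\Delta x=x_1-x_2$ and $\Delta y=y_1-y_2$. The endpoints correspond to $t=0$ and $t=1$. So the task is to count the $t\in(0,1)$ for which $P(t)\in\zz^2$, i.e.\ for which both $t\Delta x$ and $t\Delta y$ are integers.

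The first step is to show that $t$ must be rational with denominator dividing $d$. Write $\Delta x=d u$ and $\Delta y=d v$ with $\gcd(u,v)=1$. If $t\Delta x\in\zz$ and $t\Delta y\in\zz$, then by B\'ezout there are integers $\alpha,\beta$ with $\alpha\Delta x+\beta\Delta y=d$. Hence $td=\alpha(t\Delta x)+\beta(t\Delta y)\in\zz$, so $t=j/d$ for some integer $j$. This covers the degenerate case where one coordinate difference is $0$, since then $d=|\Delta x|$ or $d=|\Delta y|$ and the same argument applies.

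The second step is the converse. If $t=j/d$, then $t\Delta x=ju$ and $t\Delta y=jv$ are integers. So the lattice points on the closed segment are exactly $P(j/d)$ for $j=0,1,\dots,d$. These points are pairwise distinct because the direction vector is nonzero. Removing the endpoints $j=0$ and $j=d$ leaves exactly $d-1$ points, which is the claim.

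The only step needing any care is the first: showing that integrality of both coordinates forces $td\in\zz$. This is handled by the B\'ezout combination above. Everything else is direct bookkeeping.
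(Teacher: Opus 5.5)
Your proof is correct and takes essentially the same approach as the paper: parametrize the segment by $t\in[0,1]$ and show that the lattice points are exactly $t=j/d$ for $0\le j\le d$, which leaves $d-1$ once the endpoints are removed. Your B\'ezout step ($td=\alpha(t\Delta x)+\beta(t\Delta y)\in\zz$) and the distinctness check supply the justification for this characterization, which the paper simply asserts.
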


\begin{proof}
The line segment from $(x_1,y_1)$ to $(x_2,y_2)$ can be parametrized as $(x_1+(x_2-x_1)t,y_1+(y_2-y_1)t)$ for $t\in[0,1]$. This is a lattice point if and only if $(x_2-x_1)t$ and $(y_2-y_1)t$ are both integers, and there are exactly $\gcd(x_2-x_1,y_2-y_1)+1$ such $t$: they are \[0=\frac0{\gcd(x_2-x_1,y_2-y_1)},\frac1{\gcd(x_2-x_1,y_2-y_1)},\dots,\frac{\gcd(x_2-x_1,y_2-y_1)}{\gcd(x_2-x_1,y_2-y_1)}=1.\] Excluding the endpoints, which correspond to $t=0$ and $t=1$, we have $\gcd(x_2-x_1,y_2-y_1)-1$ points. 
\end{proof}

\begin{definition}
Fix a number of interior points $k$; this automatically fixes $b$ also. A choice of $a$ is \textit{valid} when $\gcd(a,b)$ and $\gcd(a-1,b)$ are $1$ and $2$ in some order (i.e. $\{\gcd(a,b),\gcd(a-1,b)\}=\{1,2\}$). 
\end{definition}

\begin{theorem}
The above definition is equivalent to the canonical lattice triangle with vertices at $(0,0)$, $(1,0)$, and $(a,b)$ having four boundary points. 
\end{theorem}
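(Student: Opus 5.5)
We count boundary lattice points edge by edge using \Cref{seglatpntctr:label}. The triangle with vertices $O=(0,0)$, $P=(1,0)$ and $Q=(a,b)$ has three edges. The edge $OP$ has direction vector $(1,0)$, so it contains $\gcd(1,0)-1=0$ lattice points strictly between its endpoints. The edge $OQ$ contains $\gcd(a,b)-1$ such points. The edge $PQ$ has direction $(a-1,b)$, so it contains $\gcd(a-1,b)-1$ such points. Adding the three vertices, the total boundary count is
\[
B = 3+\bigl(\gcd(a,b)-1\bigr)+\bigl(\gcd(a-1,b)-1\bigr)=\gcd(a,b)+\gcd(a-1,b)+1 .
\]
Here $b=2k+2>0$, so both gcds are positive integers and there is no degeneracy. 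The triangle is nondegenerate because $b\neq 0$.

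Having four boundary points is therefore equivalent to $\gcd(a,b)+\gcd(a-1,b)=3$. Each gcd is at least $1$, so the only ways to write $3$ as a sum of two positive integers are $1+2$ and $2+1$. This gives exactly $\{\gcd(a,b),\gcd(a-1,b)\}=\{1,2\}$, which is the definition of a valid $a$. Conversely, if the set of the two gcds is $\{1,2\}$, the sum is $3$ and $B=4$.

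The argument is a direct computation, so I expect no real obstacle. The only points needing care are that gcds are taken of possibly negative integers, which the gcd convention already handles, and that $\gcd(1,0)=1$, so the base edge contributes no interior points. One might also note that $\gcd(a,b)$ and $\gcd(a-1,b)$ are coprime, since any common divisor divides $a-(a-1)=1$, and that $b$ is even; neither fact is needed for the equivalence.
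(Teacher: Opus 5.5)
Your proof is correct and takes essentially the same approach as the paper: both use \Cref{seglatpntctr:label} edge by edge, note that the base edge from $(0,0)$ to $(1,0)$ is primitive, and reduce ``four boundary points'' to $\{\gcd(a,b),\gcd(a-1,b)\}=\{1,2\}$. Your explicit count $B=\gcd(a,b)+\gcd(a-1,b)+1$ just makes the paper's verbal case split (one primitive edge, one edge with a single extra point) a little more transparent, and it checks the nondegeneracy that the lemma requires.
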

\begin{proof}
Having exactly four boundary points is equivalent to having two primitive edges and one non-primitive edge; in our canonical triangle, the bottom edge is fixed as primitive, so one of the two other edges is primitive and the other has one lattice point on it (aside from its endpoints). By \Cref{seglatpntctr:label}, as these segments have horizontal and vertical displacements $a,b$ and $a-1,b$, a choice of $a$ being valid is equivalent to having $\gcd(a,b)-1=0$ and $\gcd(a-1,b)-1=1$ or vice versa, i.e. $\{\gcd(a,b),\gcd(a-1,b)\}=\{1,2\}$. 
\end{proof}
\newpage
\subsection{The $k=5$ case}

\begin{theorem}
\label{five:label}
The number $5$ is $B4$-collinear. 
\end{theorem}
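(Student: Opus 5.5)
The plan is to reduce to canonical triangles using \Cref{canonical:label}, show that up to symmetries only one canonical triangle arises when $k=5$, and then list its interior points directly.

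\textbf{Step 1: list the valid $a$.} With $k=5$ we have $b=2k+2=12$, so a canonical triangle has vertices $(0,0)$, $(1,0)$, $(a,12)$. Validity requires $\{\gcd(a,12),\gcd(a-1,12)\}=\{1,2\}$, and this depends only on $a \bmod 12$. There are two cases.
\begin{itemize}
\item If $\gcd(a,12)=2$, then $a$ is even, $4\nmid a$ and $3\nmid a$, so $a\equiv 2,10 \pmod{12}$. The residue $10$ fails because $a-1\equiv 9$ shares the factor $3$ with $12$. Hence $a\equiv 2$.
\item If $\gcd(a-1,12)=2$, the same reasoning applied to $a-1$ gives $a-1\equiv 2$ or $10$. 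The first gives $a\equiv 3$, so $\gcd(a,12)=3$, which is not allowed. The second gives $a\equiv 11$, which works.
\end{itemize}
So the valid residues are exactly $a\equiv 2$ and $a\equiv 11 \pmod{12}$.

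\textbf{Step 2: reduce to one triangle.} I would use two unimodular maps that fix the base segment from $(0,0)$ to $(1,0)$.
\begin{itemize}
\item The shear $(x,y)\mapsto(x+my,y)$ with $m\in\zz$ sends $(a,12)$ to $(a+12m,12)$.
\item The reflection $(x,y)\mapsto(1-x,y)$ swaps $(0,0)$ and $(1,0)$ and sends $a$ to $1-a$.
\end{itemize}
Both maps are lattice-preserving affine bijections, so they preserve interior lattice points and their collinearity. This is the same invariance the canonical-form lemma relies on, and I would briefly justify it for these two explicit maps. The reflection sends $a=2$ to $-1\equiv 11$. Therefore every canonical triangle with $k=5$ is equivalent to the single triangle $T$ with vertices $(0,0)$, $(1,0)$, $(2,12)$.

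\textbf{Step 3: find the interior points of $T$.} At height $0<y<12$, the left edge of $T$ is $x=y/6$ and the right edge is $x=1+y/12$. An interior lattice point therefore needs an integer $x$ with $y/6<x<1+y/12$.
\begin{itemize}
\item $x\le 0$ is impossible, since $y/6>0$.
\item $x\ge 2$ would need $y>12$, which is impossible.
\item $x=1$ requires $y<6$, giving exactly $(1,1),(1,2),(1,3),(1,4),(1,5)$.
\end{itemize}
These are five points, matching $k=5$, and they all lie on the line $x=1$. As a check, $(1,6)$ is the extra boundary point on the edge from $(0,0)$ to $(2,12)$. By \Cref{canonical:label}, $5$ is $B4$-collinear.

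I do not expect a real obstacle. The one step needing care is Step 2: the shear and reflection must be stated as legitimate reductions, or one can instead handle general $a=2+12m$ and $a=11+12m$ directly through the corresponding shear.
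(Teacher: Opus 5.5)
Your proof is correct. Your Step 1 matches the paper's finding that the valid residues are $a\equiv 2$ and $a\equiv 11\pmod{12}$, and the two routes diverge after that.

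The paper never reduces to a single triangle. For $a=2+12m$ it takes the open median from $(1,0)$ to the midpoint $(1+6m,6)$ of the non-primitive edge. For $a=11+12m$ it takes the open median from $(0,0)$ to $(6+6m,6)$. In each case it notes that the open segment lies in the interior and counts $\gcd(6m,6)-1=\gcd(6+6m,6)-1=5$ lattice points on it. Since the triangle has exactly $5$ interior points, these are all of them.

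You instead use the shear $(x,y)\mapsto(x+my,y)$ and the reflection $(x,y)\mapsto(1-x,y)$ to collapse both residue classes onto the triangle with vertices $(0,0),(1,0),(2,12)$, and then list its interior points by hand. This costs a brief justification that these two maps preserve interior lattice points and collinearity, which is the same invariance behind the canonical form. In return you get a stronger conclusion: up to lattice-preserving affine maps there is only one lattice triangle with $4$ boundary points and $5$ interior points. You also get an explicit list of the interior points, and your reflection explains why the paper's two cases look alike, since it swaps them.

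The paper's argument handles every $m$ uniformly without auxiliary maps. Its key idea, that an interior segment carrying $k$ lattice points must contain every interior point, is the more economical one. Your line $x=1$ is exactly the paper's Case~1 median with $m=0$.
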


\begin{proof}
When $k=5$, the height $b$ must be $12$. 

By definition, $a$ is valid iff either $\gcd(a,12)=2$ and $\gcd(a-1,12)=1$, or $\gcd(a,12)=1$ and $\gcd(a-1,12)=2$. 

\textbf{Case 1.} In the former case, $a$ is even and $\gcd(a/2,6)=\gcd(a-1,3)=1$, so an even $a$ is valid iff $\gcd(a/2,6)=\gcd(a-1,3)=1$. Reducing modulo $12$, an even $a$ is in $\{0,2,4,6,8,10\}$; those satisfying $\gcd(a/2,6)=1$ are $2$ and $10$. Additionally, requiring $\gcd(a-1,3)=1$ shows that $a$ belongs to this case if and only if $a\equiv2\pmod{12}$. 

\textbf{Case 2.} In the latter case, $a$ is odd and $\gcd(a,3)=\gcd((a-1)/2,6)=1$. Thus, an odd $a$ is valid iff $\gcd(a,3)=\gcd((a-1)/2,6)=1$. Reducing modulo $12$, an odd $a$ is in $\{1,3,5,7,9,11\}$; those satisfying $\gcd(a,3)=1$ are $\{1,5,7,11\}$, and the only one $\gcd((a-1)/2,6)=1$ is $11$. 

Now, we have showed that the first case holds if and only if $a\equiv2\pmod{12}$, and the second case holds if and only if $a\equiv11\pmod{12}$. Recalling that $a$ is valid iff either the first or second case holds, it follows that $a$ is valid iff \[(a\bmod12)\in\{2,11\}.\] 

Note that the single non-vertex edge lattice point is at the midpoint of one of the edges; when $a\equiv2\pmod{12}$, it is at $(a/2,6)$, while when $a\equiv11\pmod{12}$ it is at $((a+1)/2,6)$. 

\textbf{Case 1.} Consider $a\equiv2\pmod{12}$; say $a=2+12m$ where $m$ is an integer. The open (i.e., excludes endpoints) line segment between $(1,0)$ and $(a/2,6)=(1+6m,6)$ lies entirely within the triangle's interior as it is a median of the triangle and contains exactly $5$ points as \[\gcd(1+6m-1,6-0)-1=6-1=5;\] these are all the interior points, and they are all collinear. 

\textbf{Case 2.} Consider $a\equiv11\pmod{12}$; say $a=11+12m$ where $m$ is an integer. The open median of the triangle from $(0,0)$ to $((a+1)/2,6)=(6+6m,6)$ lies entirely in the triangle and contains $5$ points as \[\gcd(6+6m,6)-1=6-1=5,\] which is all of the interior points - they are all collinear. 

Therefore, if $k=5$, all valid triangles have all of their interior points collinear. Then $5$ is $B4$-collinear. 
\end{proof}

\begin{theorem}
\label{smaller:label}
The $B4$-collinear integers between $1$ and $5$ are $\{1,2,5\}$. 
\end{theorem}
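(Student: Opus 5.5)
This statement only collects facts already established in the paper, so the plan is to treat the integers $k\in\{1,2,3,4,5\}$ one at a time and cite the relevant earlier result for each. The existence clause in the definition of $B4$-collinear is automatic for every $k$: the triangle with vertices $(0,0)$, $(2,0)$, $(1,k+1)$ has $4$ boundary points and $k$ interior points. So in each case we only need to decide whether every triangle with $B(T)=4$ and $k$ interior points has its interior points collinear.

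For $k=1$ and $k=2$ the claim is trivial, since any set of at most two points is collinear; hence $1$ and $2$ are $B4$-collinear. For $k=5$ we invoke \Cref{five:label} directly.

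For $k=3$ and $k=4$ we use the explicit counterexamples in \Cref{threeandfour:sample}. For each triangle we check three things.
\begin{enumerate}
\item \emph{Four boundary points.} By \Cref{seglatpntctr:label} the edge counts are: for $(0,0),(1,0),(3,8)$, $\gcd(3,8)=1$, $\gcd(2,8)=2$, and the base is primitive; for $(0,0),(1,0),(3,10)$, $\gcd(3,10)=1$, $\gcd(2,10)=2$. Each triangle therefore has exactly one non-vertex boundary point, namely $(2,4)$, respectively $(2,5)$, giving four in total.
\item \emph{Interior count.} The areas are $4$ and $5$, so Pick's theorem gives $k=4-1=3$ and $k=5-1=4$ interior points.
\item \emph{Non-collinearity.} The listed interior points $(1,1),(1,2),(2,5)$ and $(1,1),(1,2),(1,3),(2,6)$ are not collinear, because in each case $(1,1)$ and $(1,2)$ lie on the line $x=1$ while $(2,5)$, respectively $(2,6)$, does not.
\end{enumerate}
Since the interior count from Pick's theorem matches the number of listed points, the lists are complete and no interior point has been missed. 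Hence $3$ and $4$ are not $B4$-collinear, and combining the cases gives exactly $\{1,2,5\}$.

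The only step needing any care is the completeness check for the pictured interior points in the $k=3$ and $k=4$ examples, and the match with the Pick count settles it.
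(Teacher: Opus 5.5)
Your proposal is correct and follows the paper's proof: existence plus triviality for $k\in\{1,2\}$, the explicit triangles of Figure~\ref{threeandfour:sample} for $k\in\{3,4\}$, and Theorem~\ref{five:label} for $k=5$; your boundary-count and Pick checks of the two pictured triangles are accurate extra detail that the paper leaves to the figure. One small correction of emphasis: non-collinearity needs the listed points to be genuinely interior (easily checked, e.g.\ $(2,5)$ lies strictly between $y=4(x-1)$ and $y=8x/3$ at $x=2$), whereas completeness of the list is not actually needed for that step.
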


\begin{proof}
The integers $1$ and $2$ are trivially $B4$-collinear. As noted in the introduction, for any $k$, there exists a lattice triangle with $4$ boundary points and $k$ interior points; a singleton interior point is collinear, as is a set of two interior points. 

By \Cref{threeandfour:sample}, $3$ and $4$ are not $B4$-collinear. 

By \Cref{five:label}, $5$ is $B4$-collinear. 
\end{proof}

We will now prove that there are no $B4$-collinear integers greater than $5$. 

\section{A Non-Collinearity Criterion for $B(T)=4$}

\begin{theorem}
\label{noncolcrit:label}
Suppose $k\ge4$ and $3\le a\le k$ for some valid $a$. Then the interior points of such a triangle are not all collinear. 
\end{theorem}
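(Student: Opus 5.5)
The plan is to exhibit three explicit interior lattice points of the canonical triangle with vertices $O=(0,0)$, $(1,0)$, $(a,b)$, where $b=2k+2$, and check that they are not collinear. Because $3\le a$, the free vertex lies to the right of $(1,0)$, so the triangle is bounded below by $y=0$, on the left by the line $y=\frac{b}{a}x$ through $O$, and on the right by the line $y=\frac{b}{a-1}(x-1)$ through $(1,0)$. A lattice point $(x,y)$ with $0<y<b$ is interior exactly when
\[\frac{b}{a}x<y<\frac{b}{a-1}(x-1),\]
that is, when $y$ lies strictly above the line through $O$ and strictly below the line through $(1,0)$ (recall $\frac{b}{a}>\frac{b}{a-1}$ when $a\ge 2$).

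First, look at the column $x=1$. Here the condition reads $\frac{b}{a}<y<b$ (the right-hand bound is vacuous since the segment $x=1$ starts at the vertex $(1,0)$; equivalently, use the edge from $(1,0)$ to $(a,b)$ for $x\ge1$). So the points $(1,y)$ with $\lfloor b/a\rfloor<y$ and $y$ below the upper edge are interior. Since $a\le k$ gives $b/a\ge (2k+2)/k>2$, and the column $x=1$ in the interior spans $y$ from just above $b/a$ up to the intersection with the edge through $O$... I would redo this carefully. A cleaner choice is to pick points on two different vertical lines $x=1$ and $x=2$.
\begin{itemize}
\item For $x=1$: the points $(1,y)$ with $0<y<b/a$ are interior, because they lie above the segment from $(1,0)$ upward and to the right of the line $y=\frac{b}{a}x$. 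Since $a\le k<b/2$, we have $b/a>2$, so both $(1,1)$ and $(1,2)$ are interior.
\item For $x=2$: the points $(2,y)$ with $\frac{b}{a-1}<y<\frac{2b}{a}$ are interior. Because $a\ge3$, the length $\frac{2b}{a}-\frac{b}{a-1}=\frac{b(a-2)}{a(a-1)}$ is positive. I would show that this interval contains an integer, for instance by exploiting $b=2k+2\ge 2a+2$.
\end{itemize}

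Non-collinearity is then immediate. The two points $(1,1)$ and $(1,2)$ lie on the vertical line $x=1$, and any interior point with $x=2$ lies off that line, so the three points are not collinear.

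The main obstacle is guaranteeing an integer in the interval $\bigl(\frac{b}{a-1},\frac{2b}{a}\bigr)$. Its length $\frac{b(a-2)}{a(a-1)}$ is at least $1$ whenever $b\ge \frac{a(a-1)}{a-2}$. Using $b\ge 2a+2$, this inequality holds because $(2a+2)(a-2)\ge a(a-1)$ reduces to $a^2-a-4\ge0$, which is true for $a\ge3$. An interval of length at least $1$ might still have integer endpoints with no integer strictly inside, so the borderline equality cases need checking. In the strict case the open interval contains an integer. If equality forces both endpoints to be integers, I would instead take another column, such as $x=a-1$ or $x=2$ combined with a shifted choice, or rely on validity: $\gcd(a,b)$ and $\gcd(a-1,b)$ are $1$ and $2$, which prevents $\frac{b}{a-1}$ and $\frac{2b}{a}$ from both being integers except for tiny $a$. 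Those small cases I would dispose of by direct check.

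Alternatively, if the column $x=2$ gives trouble, any interior point not on $x=1$ suffices. One could take the interior lattice point nearest the non-primitive edge's midpoint $(a/2,b/2)$ or $((a+1)/2,b/2)$, which has $x\ge2$ when $a\ge3$.
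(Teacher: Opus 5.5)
Your core argument is correct and takes a different route from the paper, but two points in the write-up need repair. First, you state the general interiority condition backwards. For $a\ge 3$ and $b>0$ the interior is $\frac{b}{a-1}(x-1)<y<\frac{b}{a}x$ together with $y>0$: interior points lie below the line through $O$ and above the line through $(1,0)$. Also $\frac{b}{a-1}>\frac{b}{a}$, not the reverse. Your two bullets already use the correct condition, so discard the opening paragraph and the abandoned ``$\frac ba<y<b$'' line. Second, the borderline case you worry about cannot occur. Since $b=2k+2\ge 2a+2$ and $a-2>0$,
\[b(a-2)-a(a-1)\ge(2a+2)(a-2)-a(a-1)=a^2-a-4\ge 2,\]
so the open interval $\bigl(\frac{b}{a-1},\frac{2b}{a}\bigr)$ has length strictly greater than $1$ and contains an integer $y_0$. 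You can delete all the fallback ideas (other columns, validity, the midpoint). The points $(1,1)$, $(1,2)$, $(2,y_0)$ are then three explicit non-collinear interior points.

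The paper instead counts every interior point on $x=1$, obtaining $\lfloor(2k+1)/a\rfloor$. It shows this count lies between $2$ and $k-1$, then uses that the triangle has exactly $k$ interior points to infer that some interior point lies off $x=1$, without ever locating it. That proof is shorter, since a single floor estimate does all the work. Your version replaces the global count with an explicit witness on the column $x=2$. This costs the extra length estimate, but it is constructive and uses only $a\ge 3$ and $b\ge 2a+2$. In particular it does not need $k\ge 4$: for instance $k=3$, $a=3$, $b=8$ gives $(1,1),(1,2),(2,5)$.
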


\begin{remark}This indicates that for $k\ge4$, if there are any valid $a$ between $3$ and $k$, $k$ cannot possibly be $B4$-collinear - a sufficient criterion for being not $B4$-collinear. \end{remark}

\begin{proof}
We consider the number of lattice interior points of the canonical triangle lying on the line $x=1$. 


The intersection of the canonical triangle and the line $x=1$ is the line segment from $(1,0)$ to $\left(1,\frac ba\right)$; the number of interior lattice points on this segment is \[\left\lceil\frac ba\right\rceil-1=\left\lfloor\frac{b-1}a\right\rfloor=\left\lfloor\frac{2k+1}a\right\rfloor.\] We derive two inequalities. 

As $a\ge3$, \begin{equation}\left\lfloor\frac{2k+1}a\right\rfloor\le\left\lfloor\frac{2k+1}3\right\rfloor\le\frac{2k+1}3\le k-1,\end{equation} the latter inequality following when $k\ge4$. 

As $a\le k$, \begin{equation}\left\lfloor\frac{2k+1}a\right\rfloor\ge\left\lfloor\frac{2k+1}k\right\rfloor\ge\left\lfloor\frac{2k}k\right\rfloor=2.\end{equation} 

Putting these two together, we find that at least $2$ interior lattice points lie on the line $x=1$, but at least one interior point lies off of it. For all of the interior points to be collinear, there must exist a line incident to all of the points; any such line must be incident to the two points known to be on $x=1$. However, as two points fix a line, the only line passing through both is the line $x=1$ itself, which is not incident to the point off of $x=1$. This shows that the interior points are \textit{not} all collinear. 
\end{proof}

It follows that for $k\ge4$, if there are any valid $a$ between $3$ and $k$, not all interior lattice points will be collinear. We proceed to establish criteria for there being any valid $a$ between $3$ and $k$. 

\subsection{Alder's $\phi(m,2)$ function}

\begin{definition}
We define $g(k)$ to be the number of $0\le r<k$ such that $\gcd(k,r)=\gcd(k,2r+1)=1$ so that $g(k+1)$ is the number of $0\le r\le k$ such that $\gcd(r,k+1)=\gcd(2r+1,k+1)=1$. \[g(k+1):=|\{0\le r\le k\mid\gcd(r,k+1)=\gcd(2r+1,k+1)=1\}|\]
\end{definition}

\begin{remark} This is Alder's extension of the Euler totient function\cite{alder1958generalization} with the second argument fixed at $2$. \end{remark}

Before we continue, we establish certain properties about $g$. 

\begin{lemma}
The function $g$ is multiplicative. 
\end{lemma}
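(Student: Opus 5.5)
The plan is to use the Chinese Remainder Theorem. Let $m,n$ be coprime positive integers. Write
\[S(N)=\{r\in\zz/N\zz : \gcd(r,N)=\gcd(2r+1,N)=1\},\]
so that $g(N)=|S(N)|$. This definition makes sense: both conditions depend only on $r \bmod N$, because $\gcd(r,N)$ and $\gcd(2r+1,N)$ are unchanged when $r$ is replaced by $r+N$. Summing over $0\le r<N$ therefore counts exactly the residues in $S(N)$. Our goal is $g(mn)=g(m)g(n)$, and we will get it from a bijection $S(mn)\to S(m)\times S(n)$.

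The map will be the CRT isomorphism $\zz/mn\zz\to\zz/m\zz\times\zz/n\zz$, $r\mapsto(r\bmod m,\ r\bmod n)$, which is a bijection since $\gcd(m,n)=1$. It then suffices to show that $r\in S(mn)$ if and only if $r\bmod m\in S(m)$ and $r\bmod n\in S(n)$. This rests on the standard fact that for coprime $m,n$ and any integer $x$, $\gcd(x,mn)=1$ iff $\gcd(x,m)=1$ and $\gcd(x,n)=1$, since a prime divides $mn$ iff it divides $m$ or $n$.

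Apply this fact with $x=r$ and with $x=2r+1$. For the second, we also need $\gcd(2r+1,m)=\gcd(2(r\bmod m)+1,m)$, which holds because $2r+1\equiv 2(r\bmod m)+1\pmod m$. Combining these gives the equivalence, so the CRT map restricts to a bijection $S(mn)\to S(m)\times S(n)$ and $g(mn)=g(m)g(n)$. We should also record $g(1)=1$: the only $r$ is $r=0$, and $\gcd(0,1)=\gcd(1,1)=1$.

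The main obstacle is mild. The only thing to be careful about is that membership in $S(N)$ is a condition on residues, so that the count over $0\le r<N$ truly matches the residue-class count and the CRT bijection applies. Everything else is routine.
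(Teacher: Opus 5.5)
Your proposal is correct and follows essentially the same route as the paper: both split the coprimality conditions $\gcd(r,mn)=1$ and $\gcd(2r+1,mn)=1$ across the coprime factors, then use the Chinese Remainder bijection $\mathbb{Z}/mn\mathbb{Z}\to\mathbb{Z}/m\mathbb{Z}\times\mathbb{Z}/n\mathbb{Z}$ to conclude $g(mn)=g(m)g(n)$. You are also a bit more careful than the paper in checking that membership depends only on the residue class and in recording $g(1)=1$.
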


\begin{proof}
Suppose $m=m_1m_2$ where $\gcd(m_1,m_2)=1$. Our conditions for counting some $r$ in $g$ can be decomposed: $\gcd(r,m)=1\iff\gcd(r,m_1)=\gcd(r,m_2)=1$ and $\gcd(2r+1,m)=1\iff\gcd(2r+1,m_1)=\gcd(2r+1,m_2)=1$. Hence $0\le r<m$ is counted in $g$ if and only if $r\mod m_1$ and $r\mod m_2$ are counted by $g(m_1)$ and $g(m_2)$, respectively; by the Chinese Remainder Theorem, the decomposition $t\mapsto(t\mod m_1,t\mod m_2)$ is a bijection if we view it as a map from $\zz/m\zz$ to $\zz/m_1\zz\times\zz/m_2\zz$. It follows that $g(m)=g(m_1)g(m_2)$. 
\end{proof}

\begin{lemma}
If $p$ is prime and $c$ is a positive integer, \[g(p^c)=\begin{cases}p^{c-1}&p=2\\(p-2)p^{c-1}&p\neq2\end{cases}.\]
\end{lemma}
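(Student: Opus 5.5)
We count residues $r$ modulo $p^c$ directly, using the definition of $g$. The key observation is that for a prime power modulus, $\gcd(x,p^c)=1$ holds exactly when $p\nmid x$. So $g(p^c)$ equals the number of $0\le r<p^c$ with $p\nmid r$ and $p\nmid 2r+1$. Both conditions depend only on $r \bmod p$. The residues $0\le r<p^c$ are distributed evenly among the $p$ classes modulo $p$, with exactly $p^{c-1}$ in each class. Hence $g(p^c)=p^{c-1}\cdot N_p$, where $N_p$ is the number of residues $r\in\zz/p\zz$ with $r\not\equiv 0$ and $2r+1\not\equiv 0 \pmod p$.

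The remaining step is to compute $N_p$, splitting into two cases.

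\textbf{Case $p=2$.} Here $2r+1$ is always odd, so the second condition is automatic. The first condition excludes only $r\equiv 0$. Thus $N_2=1$ and $g(2^c)=2^{c-1}$.

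\textbf{Case $p$ odd.} Since $2$ is invertible modulo $p$, the congruence $2r+1\equiv 0$ has exactly one solution, $r\equiv -2^{-1}\pmod p$. This excluded residue is different from $0$, because $2\cdot 0+1=1\not\equiv 0$. So exactly two distinct residues are excluded, giving $N_p=p-2$ and $g(p^c)=(p-2)p^{c-1}$.

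There is no real obstacle here. The only points needing care are the reduction from the modulus $p^c$ to the modulus $p$ via the equidistribution of residues, and checking that the two excluded residues are distinct when $p$ is odd.
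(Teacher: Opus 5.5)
Your proof is correct and follows essentially the same route as the paper: both reduce the coprimality conditions to $p\nmid r$ and $p\nmid 2r+1$, handle $p=2$ separately, and for odd $p$ use the invertibility of $2$ to get exactly one residue class with $p\mid 2r+1$, distinct from the class of $0$. The paper subtracts the $2p^{c-1}$ bad residues from $p^c$ (getting disjointness from $\gcd(r,2r+1)=1$), whereas you count the $p-2$ good classes modulo $p$ and multiply by $p^{c-1}$, which is the same computation.
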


\begin{proof}
Suppose $p$ is a prime and $c$ is a positive integer. Then to count $g(p^c)$, we count the $0\le r<p^c$ which satisfy $\gcd(r,p^c)=\gcd(2r+1,p^c)=1$. Any $r$ which do not satisfy these constraints are those such that $p\mid r$ or $p\mid2r+1$. If $p=2$, the latter is immediate as $2r+1$ is odd, so the only bad residues are even ones; the count is $2^c-2^{c-1}=2^{c-1}=p^{c-1}$. If $p$ is an odd prime, $p>2$, and there are $p^{c-1}$ residues which are multiples of $p$. As $2$ is invertibel mod $p^c$, the congruence $2r+1\equiv1\pmod p$ has exactly one solution modulo $p$ and hence exactly $p^{c-1}$ solutions modulo $p^c$. Furthermore, these two counts are disjoint: $r$ and $2r+1$ are relatively prime, so $p$ cannot divide both of them. It follows that $g$ counts exactly $p^c-2p^{c-1}=(p-2)p^{c-1}$ numbers. 
\end{proof}

\subsection{A link between $g$ and $B4$-collinear integers}

We show two facts, assuming $k\ge4$: first, that there are exactly $g(k+1)-1$ valid $a$ between $3$ and $k$, and that $g(k+1)=1\iff k=5$. The latter will follow from the properties established above. 

\begin{theorem}
\label{former:label}
The number of valid $a$ between $3$ and $k$ is equal to $g(k+1)-1$. 
\end{theorem}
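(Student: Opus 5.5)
The plan is to set $n=k+1$, so that $b=2n$, and to count valid $a$ over a full period modulo $2n$. A symmetry then cuts that count in half, and we finish by discarding the two endpoints $a=2$ and $a=n$ of the interval $[2,n]$. Throughout, validity of $a$ depends only on $a \bmod 2n$, since $\gcd(a,2n)$ and $\gcd(a-1,2n)$ do. We assume $k\ge4$, as in the surrounding discussion.

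\textbf{Step 1 (full-period count).} Split by parity of $a$.
\begin{itemize}
\item If $a=2s+1$ is odd, then $\gcd(a,2n)=\gcd(2s+1,n)$ because $a$ is odd, and $\gcd(a-1,2n)=2\gcd(s,n)$. So $a$ is valid iff $\gcd(s,n)=\gcd(2s+1,n)=1$. Hence the odd valid residues in $[0,2n)$ correspond bijectively to the $0\le s<n$ counted by $g(n)$, giving $g(n)$ of them.
\item If $a=2s$ is even, validity is equivalent to $\gcd(s,n)=\gcd(2s-1,n)=1$. Substitute $s\mapsto n-s$ modulo $n$. This uses $\gcd(n-s,n)=\gcd(s,n)$ and $\gcd(2(n-s)-1,n)=\gcd(2s+1,n)$. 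The even valid residues are therefore again counted by $g(n)$.
\end{itemize}
Hence there are exactly $2g(n)$ valid $a$ in any complete residue system modulo $2n$. We take the system $\{2,3,\dots,2n+1\}$.

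\textbf{Step 2 (halving by symmetry).} The residues $a\equiv0$ and $a\equiv1\pmod{2n}$ are never valid for $n\ge2$. Indeed, $a\equiv0$ gives $\gcd(a,2n)=2n$, and $a\equiv1$ gives $\gcd(a-1,2n)=2n$, and in either case $2n\ge4$. So all $2g(n)$ valid $a$ lie in $[2,2n-1]$. The involution $a\mapsto 2n+1-a$ preserves validity, because it swaps the values $\gcd(a,2n)$ and $\gcd(a-1,2n)$; geometrically, it is the reflection exchanging the vertices $(0,0)$ and $(1,0)$. It maps $[2,n]$ bijectively onto $[n+1,2n-1]$ and has no fixed point, since $2n+1$ is odd. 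Therefore $[2,n]$ contains exactly $g(n)$ valid $a$.

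\textbf{Step 3 (trimming the endpoints).} The target interval is $[3,k]=[3,n-1]$, which is $[2,n]$ with $a=2$ and $a=n$ removed.
\begin{itemize}
\item $a=2$ is always valid, since $\gcd(2,2n)=2$ and $\gcd(1,2n)=1$.
\item $a=n$ is invalid, since $\gcd(n,2n)=n\ge5>2$ when $k\ge4$.
\end{itemize}
So the count in $[3,k]$ is $g(k+1)-1$, as claimed.

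The main obstacle is Step 1, specifically the even case. Its condition involves $2s-1$ rather than $2s+1$, so it does not match the definition of $g$ directly, and the reindexing $s\mapsto n-s$ is needed to bring it into that form. The rest is routine bookkeeping, but the exclusions of $a\equiv0,1$ and of $a=n$ should be checked carefully, because each uses that $n$ is not too small.
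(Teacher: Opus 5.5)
Your proof is correct, but it organizes the count differently from the paper.

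The paper never counts a full period or halves anything. It sends odd valid $a=2r_1+1\in[3,k]$ to $r_1\in[1,\lfloor (k-1)/2\rfloor]$, and even valid $a=2r_2'$ to $r_2=k+1-r_2'\in[\lceil k/2\rceil+1,k-1]$. These ranges fill $[0,k]$ except for $0$, $\lceil k/2\rceil$ and $k$. It then checks that $0$ and $\lceil k/2\rceil$ are never counted by $g(k+1)$, while $k$ always is.

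The substitution $r_2=k+1-r_2'$ is exactly your involution $a\mapsto 2n+1-a$ applied to even $a$, followed by $a'\mapsto (a'-1)/2$. So both proofs rest on the same symmetry that swaps the vertices $(0,0)$ and $(1,0)$, and the exceptional values match up:
\begin{itemize}
\item the paper's always-good $r=k$ is your always-valid $a=2$;
\item its never-good $r=\lceil k/2\rceil$ is your invalid endpoint $a=n$ (or its mirror $n+1$);
\item its $r=0$ corresponds to your excluded residues $a\equiv 0,1$.
\end{itemize}

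The paper's packing uses a single copy of $g(k+1)$ and needs no full-period count. The cost is explicit floor/ceiling range computations and a parity-of-$k$ check on the middle value. Your full-period-then-halve route avoids that bookkeeping and makes the origin of the ``$-1$'' transparent. It also gives the extra fact that exactly $2g(k+1)$ residues modulo $2k+2$ are valid.

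One minor point: your even-case reindexing in Step 1 is the same involution restricted to even residues. Since that involution swaps parity over a full period, Step 2's symmetry alone already shows the even count equals the odd count $g(n)$.
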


\begin{proof}
Recall that $a$ is valid when $\gcd(a,2k+2)=1$ and $\gcd(a-1,2k+2)=2$ or $\gcd(a,2k+2)=2$ and $\gcd(a-1,2k+2)=1$. These cases are disjoint: in the former case, $a$ must be odd, while in the latter case, $a$ must be even; thus, the number of valid $a$ is the sum of the number of valid $a$ in either case. 

In the former case, let $a=2r_1+1$; then the conditions are equivalent to \[\gcd(2r_1+1,k+1)=\gcd(r_1,k+1)=1.\] The latter case is equivalent to $\gcd(r'_2,k+1)=\gcd(2r'_2-1,k+1)=1$ with $a=2r'_2$. By properties of the greatest common divisor, $r_2:=k+1-r'_2$ satisfies the following: \[\gcd(r_2,k+1)=\gcd(2r_2+1,k+1)=1.\] There is a bijection between $r'_2$ and the corresponding $r_2$ value, so we can count $r_2$ instead. 

We now establish bounds on each $r_i$. We have $r_1:=\frac{a-1}2$, $r'_2:=\frac a2$, and $r_2:=k+1-\frac a2$. From $3\le a\le k$, the bounds are \[1\le r_1\le\left\lfloor\frac{k-1}2\right\rfloor,\qquad\left\lceil\frac k2+1\right\rceil\le r_2\le k-1.\] These ranges are disjoint and cover the integers in $[1,k-1]$ except one integer in the middle, which is $\lceil k/2\rceil$: $\frac k2$ if $k$ is even and $\frac{k+1}2$ if $k$ is odd. 

Say that $r$ is \textit{$k$-good} when $\gcd(r,k+1)=\gcd(2r+1,k+1)=1$. We want the number of $k$-good $1\le r\le k-1$ except for $\lceil k/2\rceil$. It is apparent that $g(k+1)$ counts the number of $k$-good $0\le r\le k$. 

We have $3$ numbers left to consider which are extraneously considered by $g$: \[[0,k]\setminus\left[1,\left\lfloor\frac{k-1}2\right\rfloor\right]\cup\left[\left\lceil\frac k2+1\right\rceil,k-1\right]=\left\{0,\left\lceil\frac k2\right\rceil,k\right\}.\] (Intervals are taken over the integers.) 

First, $0$ and $\lceil k/2\rceil$ are not $k$-good for any $k$, so these two are not a problem: $\gcd(0,k+1)=k+1>1$ for $0$; and if $k$ is even, \[\gcd(2\lceil k/2\rceil+1,k+1)=\gcd(k+1,k+1)=k+1>1\] while if $k$ is odd, \[\gcd(\lceil k/2\rceil,k+1)=\gcd((k+1)/2,k+1)=(k+1)/2>1.\] 

Now for $r=k$: this is considered by $g$ but does not correspond to a valid choice of $a$. We see that $\gcd(k,k+1)=\gcd(2k+1,k+1)=1$ for any $k$, so $k$ is $k$-good. Therefore, $g$ always overcounts by one, so the count we desire is exactly $g(k+1)-1$. We have demonstrated that the number of valid $a$ between $3$ and $k$ is $g(k+1)-1$. 
\end{proof}

We now consider the implications of $g(k+1)-1=0$, or $g(k+1)=1$. 

\begin{theorem}
\label{latter:label}
If $k\ge4$, $g(k+1)=1$ if and only if $k=5$. 
\end{theorem}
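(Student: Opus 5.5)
We will use the two structural facts about $g$ established above: multiplicativity, and the prime-power formula $g(2^c)=2^{c-1}$ and $g(p^c)=(p-2)p^{c-1}$ for odd $p$. Write $n=k+1\ge5$ and factor $n=2^{c_0}\prod_{i}p_i^{c_i}$ with distinct odd primes $p_i$. Then
\[g(n)=g(2^{c_0})\prod_i (p_i-2)p_i^{c_i-1},\]
where $g(2^{0})=1$. Every factor is a positive integer, so $g(n)=1$ holds if and only if every factor equals $1$.

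The first step is to go through the factors one at a time. The factor $g(2^{c_0})$ equals $1$ exactly when $c_0\in\{0,1\}$. For an odd prime $p$, the factor $(p-2)p^{c-1}$ equals $1$ exactly when $p-2=1$ and $c=1$, that is, $p=3$ appearing to the first power. Every odd prime $p\ge5$ gives $p-2\ge3$, and $3^2$ gives $g(9)=3$. Hence $g(n)=1$ if and only if $n\mid 6$, so $n\in\{1,2,3,6\}$.

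The second step is to apply the hypothesis. Since $k\ge4$ means $n\ge5$, the only possibility is $n=6$, which gives $k=5$. For the converse we compute directly: $g(6)=g(2)g(3)=1\cdot1=1$. As a sanity check, the definition gives the same answer, since among $0\le r<6$ only $r=5$ satisfies $\gcd(r,6)=\gcd(2r+1,6)=1$.

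No step here is a real obstacle. The only thing to be careful about is the empty-product and $c_0=0$ conventions, so that $n=1$ and odd $n$ are handled correctly. Everything follows from the lemmas already proved.
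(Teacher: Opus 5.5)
Your proof is correct and follows the paper's route: factor $k+1$, use multiplicativity and the prime-power formula so that every local factor must equal $1$, conclude $k+1\in\{1,2,3,6\}$, and keep only $6$ using $k\ge4$. Your direct check that $g(6)=1$ (only $r=5$ qualifies) spells out the converse direction, which the paper leaves implicit in its chain of equivalences.
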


\begin{proof}
Recalling our properties above, for $p=2$, $g(2^e)=1$ if and only if $2^{e-1}=1$ or $e=0$, i.e. $e=1$ or $e=0$. For $p>2$, we have $g(p^e)=1$ if and only if $e=0$ or $(p-2)p^{e-1}=1$; as the product terms are positive integers, both must be $1$, i.e. $p=3$ and $e=1$ (or $e=0$). Thus, as $g(k+1)=1$ means that all product terms are $1$, $k+1$ is either $2^03^0=1$, $2^13^0=2$, $2^03^1=3$, or $2^13^1=6$. 

This gives $1$, $2$, $3$, and $6$, i.e. $k\in\{0,1,2,5\}$. We assumed $k\ge4$, so $g(k+1)=1\iff k=5$. 
\end{proof}

We have shown that if $k\ge4$ and $k\neq5$, then there is a valid $a$ which gives non-collinear interior points. Additionally, we showed earlier that $4$ is not $B4$-collinear, so it follows that no $k$ greater than $5$ is $B4$-collinear. This will be formalized in the next section. 

\section{$B(T)=4$ Assembly}
We revisit \Cref{main:label}. 

\textbf{Theorem 1.} The only $B4$-collinear integers are $1$, $2$, and $5$. 

\begin{proof}
In the following, assume that $k\ge4$. 

By \Cref{latter:label}, $g(k+1)-1=0$ iff $k=5$; applying \Cref{former:label} implies that there are no valid $a$ between $3$ and $k$ iff $k=5$. In other words, there is some valid $a$ between $3$ and $k$ iff $k\neq5$. By \Cref{noncolcrit:label}, if there is some valid $a$ between $3$ and $k$, the interior points of the respective triangle are not collinear, so $k$ is not $B4$-collinear. Thus, there are no $B4$-collinear integers greater than $5$. 

Combining this with \Cref{smaller:label}, the only $B4$-collinear integers are $\{1,2,5\}$. 
\end{proof}

\begin{remark}
We also performed extensive computational verification. For $k=5$, we exhaustively checked every valid canonical triangle with free vertex $(a,12)$ for $2 \le a \le 50,\!000$, and in every case the five interior lattice points were collinear. Furthermore, for every integer $6 \le k \le 10^6$, we computationally found a lattice triangle with $4$ boundary points whose interior lattice points are not collinear. These computations are consistent with the theoretical classification established in \Cref{main:label} and are not part of the proof.
\end{remark}

\section{$B(T)=5$ case}

First, consider any lattice triangle with $5$ boundary points. It is evident that it must have at least one primitive edge\footnote{Otherwise, there would be at least one non-vertex boundary point on each of the $3$ edges in addition to the $3$ vertices, leading to at least $6$ boundary points.}. Fix one of the vertices incident to a primitive edge to be the origin and say that the other vertices are $(x,y)$ and $(a',b')$, where the line segment from the origin to the former is primitive. By \Cref{seglatpntctr:label}, $\gcd(x,y)$ must be $1$. 

As in \Cref{canonical_section}, we can apply a transformation which sends $(x,y)$ to $(1,0)$ and preserves the number of interior points, the number of boundary lattice points, and the collinearity of any points. Denote the image of the other vertex $(a',b')$ under this transformation by $(a,b)$; this gives a canonical triangle with vertices at $(0,0)$, $(1,0)$, and $(a,b)$ with $5$ boundary points. The interior points of a lattice triangle with $5$ boundary points are collinear if and only if the interior points of the corresponding canonical lattice triangle are collinear. 

We now show that for every number of interior points $k$, no canonical triangle with $5$ boundary points and $k$ interior points has all of its interior points collinear. 

\begin{theorem}
Every canonical lattice triangle with $5$ boundary points has two primitive edges and one edge with two non-vertex lattice points. 
\end{theorem}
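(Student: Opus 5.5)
The plan is to count boundary points edge by edge using \Cref{seglatpntctr:label}. Then we use the one remaining freedom, namely which vertex we placed at the origin, to exclude the competing distribution of boundary points.

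\textbf{Counting.} Write $n_1,n_2,n_3\ge 0$ for the numbers of non-vertex lattice points on the three edges. Since $B(T)=5$ and there are three vertices, $n_1+n_2+n_3=2$. By construction the edge from $(0,0)$ to $(1,0)$ is primitive, so its count is $0$. The pair of counts on the other two edges, given by $\gcd(a,b)-1$ and $\gcd(a-1,b)-1$, therefore sums to $2$. This leaves two possibilities: $\{0,2\}$, which is what the statement asserts, or $\{1,1\}$.

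\textbf{Excluding $\{1,1\}$.} In this case both $\gcd(a,b)=2$ and $\gcd(a-1,b)=2$. Both $a$ and $a-1$ would then be even, which is impossible since they are consecutive integers. So this case cannot occur.

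\textbf{Conclusion.} Only the pattern $\{0,2\}$ survives. One of the two non-base edges is primitive, and the other has exactly two interior lattice points, i.e.\ gcd equal to $3$. Together with the primitive base edge, the triangle has two primitive edges and one edge with two non-vertex lattice points, as claimed.

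The only step needing care is the exclusion of $\{1,1\}$. The parity argument settles it cleanly for the canonical form. Equivalently, one can note directly in the original triangle that an edge with exactly one interior lattice point has even coordinate differences. Two such edges sharing a vertex would force the third edge to have even differences as well, giving it at least one interior point and hence $6$ boundary points. So the claim does not depend on which primitive edge was chosen when forming the canonical triangle.
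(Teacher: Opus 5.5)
Your proof is correct and follows the paper's argument essentially verbatim: the base edge is primitive, the two remaining non-vertex points split as $\{0,2\}$ or $\{1,1\}$, and the $\{1,1\}$ case is ruled out because $\gcd(a,b)=\gcd(a-1,b)=2$ would make both $a$ and $a-1$ even. Your closing observation about even edge vectors extends the result to arbitrary lattice triangles, which the paper only asserts in a remark; note also that, despite your opening sentence, the argument never uses any freedom in choosing which vertex is placed at the origin.
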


\begin{remark}This is true of all lattice triangles with $5$ boundary points, but we need it only for canonical lattice triangles.\end{remark}

\begin{proof}
Say our vertices are at $(0,0)$, $(1,0)$, and $(a,b)$. As $3$ of the $5$ boundary points are taken by the vertices, we consider the distribution of the two extra boundary points between the two remaining edges. 

There are two possibilities: one boundary point on each, or both to a single edge. Suppose our triangle had one primitive edge and two edges with one non-vertex boundary point each. By \Cref{seglatpntctr:label}, we have $\gcd(a,b)=\gcd(a-1,b)=2$, implying that $a$, $b$, and $a-1$ are all even. However, exactly one of $a$ and $a-1$ is even while the other must be odd - a contradiction. 

It follows that all canonical lattice triangles with $5$ boundary points have two primitive edges and one edge with two non-vertex boundary points on it. 
\end{proof}

As the edge between $(0,0)$ and $(1,0)$ is primitive, there is still the matter of which of the two other edges (between $(0,0)$ and $(a,b)$ and between $(1,0)$ and $(a,b)$) is primitive and which is not. By \Cref{seglatpntctr:label}, these two possibilities are captured by the condition $\{\gcd(a,b),\gcd(a-1,b)\}=\{1,3\}$. Hence, say a choice of $a$ is \textit{valid} when $\{\gcd(a,b),\gcd(a-1,b)\}=\{1,3\}$. This is equivalent to there being $5$ boundary lattice points on the canonical lattice triangle with vertices $(0,0)$, $(1,0)$, and $(a,b)$. 

Observe that for a valid choice of $a$, as either $\gcd(a,b)$ or $\gcd(a-1,b)$ is $3$, $3$ must divide $b$. 

Additionally, by Pick's theorem, we have that the area of such a triangle is $k+5/2-1=k+3/2$. By the standard triangle area formula, the area is $b/2$; it follows that $b/2=k+3/2$, so $b=2k+3$. 

\begin{theorem}
\label{notonetwo:label}
The number of interior points of any lattice triangle with $5$ boundary points is a multiple of $3$. 
\end{theorem}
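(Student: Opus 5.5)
The plan is to combine the two facts just established for canonical triangles with $5$ boundary points. By the reduction at the start of this section, every lattice triangle with $5$ boundary points has a canonical counterpart with vertices $(0,0)$, $(1,0)$ and $(a,b)$. This counterpart has the same number of interior points $k$ and the same number of boundary points. It therefore suffices to prove the statement for canonical triangles.

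For a canonical triangle, the preceding theorem says that two edges are primitive and one edge carries exactly two non-vertex lattice points. Hence $a$ is valid in the sense $\{\gcd(a,b),\gcd(a-1,b)\}=\{1,3\}$. Whichever of $\gcd(a,b)$ and $\gcd(a-1,b)$ equals $3$ is a divisor of $b$, so $3\mid b$. (As before, reflecting across the $x$-axis lets us assume $b>0$; this changes neither the gcds nor the point counts.)

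Next, Pick's theorem applied to $B(T)=5$ gives area $k+\tfrac32$. The base from $(0,0)$ to $(1,0)$ has length $1$ and height $b$, so the area is also $b/2$, giving $b=2k+3$. From $3\mid 2k+3$ we get $3\mid 2k$. Since $\gcd(2,3)=1$, this yields $3\mid k$.

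None of these steps is genuinely difficult. The only point needing care is that the canonical transformation preserves both interior and boundary counts, which was asserted when the canonical form was introduced. With that granted, the argument is the one-line divisibility computation above.
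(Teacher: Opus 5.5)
Your proposal is correct and follows essentially the paper's argument: validity forces $\{\gcd(a,b),\gcd(a-1,b)\}=\{1,3\}$, hence $3\mid b$, and Pick's theorem gives $b=2k+3$, so $3\mid k$. The only difference is that you argue directly, whereas the paper phrases the same divisibility step as a contrapositive/contradiction ($3\nmid k$ means no valid $a$ exists).
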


\begin{proof}
It follows from $b=2k+3$ that a necessary condition of $a$ being valid is that $3\mid k$, as otherwise $3$ would not divide $2k+3=b$. Thus, if $3\nmid k$, then no integer $a$ is valid - i.e., the canonical triangle with free vertex at $(a,2k+3)$ for each $a$ does not have exactly $5$ boundary points. 

If, for the sake of contradiction, there were some lattice triangle with $5$ boundary points and $k$ interior points where $3\nmid k$, then its corresponding canonical triangle would have $5$ boundary points, $k$ interior points, and free vertex at $(a,2k+3)$ for some $a$; however, $3\nmid k\implies3\nmid2k+3$, meaning that our canonical triangle cannot possibly have $5$ boundary points - a contradiction. 

Therefore, for a given $k$ such that $3\nmid k$, there are no lattice triangles with $5$ boundary points and $k$ interior points. 

\end{proof}

\begin{corollary}
No integers which are not multiples of $3$ are $B5$-collinear. 
\end{corollary}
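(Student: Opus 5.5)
The corollary says that no $k$ with $3\nmid k$ is $B5$-collinear, and it follows almost directly from \Cref{notonetwo:label} together with the existence clause in the definition of $Bn$-collinear. I would argue by contraposition: assume $k$ is $B5$-collinear and show that $3\mid k$.

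By definition, a positive integer $k$ is $B5$-collinear only if at least one lattice triangle with exactly $5$ boundary points and $k$ interior points exists, in addition to the collinearity requirement. So if $k$ is $B5$-collinear, some triangle $T$ has $B(T)=5$ and exactly $k$ interior points.

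Applying \Cref{notonetwo:label} to $T$ shows that its number of interior points is a multiple of $3$, so $3\mid k$. Equivalently, when $3\nmid k$ there is no lattice triangle with $5$ boundary points and $k$ interior points at all. The existence clause then fails, and $k$ cannot be $B5$-collinear. The collinearity condition, which would hold vacuously over an empty family, is irrelevant here.

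There is no real obstacle. The only point needing care is to stress that the definition includes existence, so a vacuously true "all such triangles" statement does not make $k$ collinear. It is also worth noting that \Cref{notonetwo:label} rests on the relation $b=2k+3$ and on the requirement $3\mid b$ for a valid $a$, and these were already established for canonical triangles, which represent every triangle with $5$ boundary points.
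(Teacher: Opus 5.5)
Your proposal is correct and is essentially the paper's argument. The paper likewise combines \Cref{notonetwo:label} (every lattice triangle with $5$ boundary points has a number of interior points divisible by $3$) with the existence clause in the definition of $Bn$-collinear, so that a vacuously true ``for all'' statement cannot make $k$ collinear; your contrapositive phrasing is the same deduction.
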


\begin{proof}
Our definition of $B5$-collinear integers specifically excludes vacuously satisfactory integers $k$ such that no triangles with $5$ boundary points and $k$ interior points exist. It follows that if $3\nmid k$, then $k$ is not $B5$-collinear. 
\end{proof}

\subsection{A Non-Collinearity Criterion for $B(T)=5$}

We investigate which multiples of $3$ are $B5$-collinear. Henceforth fix an arbitrary positive $k$ to be a multiple of $3$.\footnote{Vacuously, for any number of boundary points $n$, a triangle with $k=0$ interior points has all of its interior points collinear; since such a triangle always exists, we discount this trivial case in the definition, taking $k$ to be positive.} Additionally, fix $a=4$. This choice of $a$ is valid: $\gcd(4,2k+3)$ must be $1$ as the latter term is odd, and $\gcd(4-1,2k+3)=3$ as the latter is a multiple of $3$. Thus, this gives a canonical lattice triangle with vertices $(0,0)$, $(1,0)$, and $(4,b)=(4,2k+3)$; denote this triangle $T_k$. 

\begin{definition}
Let $T_k$ denote the lattice triangle with vertices at $(0,0)$, $(1,0)$, and $(4,2k+3)$. As reasoned above, it has exactly $5$ boundary points and $k$ interior points. 
\end{definition}

\begin{lemma}
There are exactly $\left\lfloor\frac{k+1}2\right\rfloor$ interior points of $T_k$ on the line $x=1$. 
\end{lemma}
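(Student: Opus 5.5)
We will mirror the computation in the proof of \Cref{noncolcrit:label}, but now for the triangle $T_k$ with vertices $(0,0)$, $(1,0)$ and $(4,2k+3)$. The plan has three steps: identify the segment cut out by the line $x=1$, count the lattice points on it, and then strip off the boundary points.

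\textbf{Step 1: the intersection with $x=1$.} The edge from $(0,0)$ to $(4,2k+3)$ lies on the line $y=\frac{2k+3}{4}x$. The edge from $(1,0)$ to $(4,2k+3)$ meets $x=1$ only at $(1,0)$. The line $x=1$ passes through the vertex $(1,0)$ and enters the triangle, since $T_k$ lies in $x\ge 0$ and the third vertex has $x=4>1$. So the intersection of $T_k$ with $x=1$ is the closed segment from $(1,0)$ to $\left(1,\frac{2k+3}{4}\right)$. The interior points of $T_k$ on this line are exactly the lattice points $(1,y)$ with $0<y<\frac{2k+3}{4}$. The endpoint $(1,0)$ is a vertex, and the upper endpoint lies on the boundary edge $y=\frac{2k+3}{4}x$. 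Points strictly between the endpoints lie in the relative interior of a chord joining two different edges, and are therefore interior points of the triangle.

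\textbf{Step 2: the count.} Since $2k+3$ is odd, $\frac{2k+3}{4}$ is never an integer. So we are counting integers $y$ with $1\le y\le \left\lfloor\frac{2k+3}{4}\right\rfloor$, and the number of interior points is $\left\lfloor\frac{2k+3}{4}\right\rfloor$. It then suffices to check the identity $\left\lfloor\frac{2k+3}{4}\right\rfloor=\left\lfloor\frac{k+1}{2}\right\rfloor$. Write $\frac{2k+3}{4}=\frac{k+1}{2}+\frac14$.
\begin{itemize}
\item If $k$ is odd, $\frac{k+1}{2}$ is an integer, and adding $\frac14$ does not change the floor.
\item If $k$ is even, $\frac{k+1}{2}=\frac k2+\frac12$, so the sum is $\frac k2+\frac34$. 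Its floor is $\frac k2=\left\lfloor\frac{k+1}{2}\right\rfloor$.
\end{itemize}

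\textbf{Main obstacle.} The only delicate point is the endpoint bookkeeping. We must justify that $\left(1,\frac{2k+3}{4}\right)$ is never a lattice point, which the parity of $2k+3$ settles. We must also justify that the open chord contains no other boundary points. This holds because the chord meets the boundary only at its two endpoints, by convexity of the triangle. Everything else is routine arithmetic. Note that the hypothesis $3\mid k$ plays no role here; it is needed only for $T_k$ to have $5$ boundary points.
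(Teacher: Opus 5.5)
Your proof is correct and follows the paper's route: identify the segment from $(1,0)$ to $\left(1,\frac{2k+3}{4}\right)$ cut out by $x=1$, then count the lattice points strictly inside it. The only differences are cosmetic. You count via $\left\lfloor\frac{2k+3}{4}\right\rfloor$ (justified by the oddness of $2k+3$) with a parity split, where the paper uses $\left\lceil\frac{2k+3}{4}\right\rceil-1$ and ceiling/floor identities; your argument that the open chord lies in the triangle's interior is a detail the paper leaves implicit.
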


\begin{proof}
The line $x=1$ intersects $T_k$ in two places: at $(1,0)$ and $(1,b/4)=(1,(2k+3)/4)$. By counting, there are exactly \[\left\lceil\frac{2k+3}4\right\rceil-1=\left\lceil\frac{2k-1}4\right\rceil=\left\lfloor\frac{2k+2}4\right\rfloor=\left\lfloor\frac{k+1}2\right\rfloor\] lattice points on the segment between $(1,0)$ and $(1,b/4)$, excluding the endpoints. 
\end{proof}

When $k\ge3$, we have $2\le\frac{k+1}2\le k-1$ and thus \begin{equation}2\le\left\lfloor\frac{k+1}2\right\rfloor\le k-1.\end{equation}

\begin{theorem}
\label{tknocol:label}
The interior points of $T_k$ are not collinear. 
\end{theorem}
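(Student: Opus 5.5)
The argument mirrors the proof of \Cref{noncolcrit:label}. Here $k$ is a positive multiple of $3$, so $k\ge3$ and inequality (3) applies.

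First, combine the preceding lemma with (3). By the lemma, the number of interior points of $T_k$ on the line $x=1$ is $\left\lfloor\frac{k+1}2\right\rfloor$, and (3) says this number is at least $2$. So at least two distinct interior lattice points lie on $x=1$.

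Second, $T_k$ has exactly $k$ interior points in total. This follows from Pick's theorem with $b=2k+3$ and $5$ boundary points, as recorded in the definition of $T_k$. The upper bound in (3) shows that at most $k-1$ of these points lie on $x=1$. Hence at least one interior point lies off that line.

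Finally, suppose all interior points were collinear. The common line would have to pass through the two distinct interior points on $x=1$, so it would be $x=1$ itself, since two distinct points determine a line. But then it would miss the interior point off $x=1$, a contradiction. So the interior points of $T_k$ are not collinear.

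The only place needing care is checking (3) at the boundary case $k=3$. There $\left\lfloor\frac{k+1}2\right\rfloor=2=k-1$, so exactly one interior point lies off $x=1$. For general $k\ge3$ the bounds $2\le\frac{k+1}{2}$ and $\frac{k+1}{2}\le k-1$ are elementary and were already noted in the paper. With this, \Cref{main5:label} follows: every positive multiple of $3$ is witnessed by $T_k$, and non-multiples are excluded by the corollary.
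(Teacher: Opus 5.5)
Your proposal is correct and follows essentially the same argument as the paper. You use $k\ge 3$ and inequality (3) to get at least two interior points of $T_k$ on $x=1$ and at least one off it. You then conclude via uniqueness of the line through two distinct points.
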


\begin{proof}
As $k>0$ and we assume $k$ to be a multiple of $3$, $k$ must be at least $3$. By (3), it follows that there are at least two interior points on $x=1$ and at least one interior point off of $x=1$. As in \Cref{noncolcrit:label}'s proof, the interior points are not collinear as a line passing through the two interior points on $x=1$ necessarily does not pass through the interior point off of $x=1$. 
\end{proof}

\begin{theorem}
For any positive integer $k$, either there is a lattice triangle with $k$ interior points and $5$ boundary points whose interior points are not collinear, or there are no lattice triangles with $k$ interior points and $5$ boundary points at all. 
\end{theorem}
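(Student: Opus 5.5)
The proof is a case split on whether $3$ divides $k$, and both cases are already handled by the preceding results.

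Fix a positive integer $k$.

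\textbf{Case $3\nmid k$.} By \Cref{notonetwo:label}, every lattice triangle with $5$ boundary points has a number of interior points divisible by $3$. So there is no lattice triangle with $5$ boundary points and $k$ interior points, and the second alternative of the statement holds.

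\textbf{Case $3\mid k$.} Since $k$ is positive, $k\ge3$. Take the triangle $T_k$ with vertices $(0,0)$, $(1,0)$ and $(4,2k+3)$. Its boundary count follows from \Cref{seglatpntctr:label}:
\begin{itemize}
\item $\gcd(4,2k+3)=1$, because $2k+3$ is odd, so the edge from $(0,0)$ to $(4,2k+3)$ is primitive.
\item $\gcd(3,2k+3)=3$, because $3\mid k$, so the edge from $(1,0)$ to $(4,2k+3)$ contains exactly $2$ non-vertex lattice points.
\item The base from $(0,0)$ to $(1,0)$ is primitive.
\end{itemize}
This gives $3+2=5$ boundary points. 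The area is $(2k+3)/2$, so Pick's theorem gives $(2k+3)/2 = I + 5/2 - 1$, that is, $I=k$ interior points. By \Cref{tknocol:label}, the interior points of $T_k$ are not collinear. So $T_k$ witnesses the first alternative of the statement.

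The two cases are exhaustive, which proves the theorem. As a consequence, no positive $k$ can be $B5$-collinear: either no suitable triangle exists (excluded by the existence clause in the definition), or some suitable triangle has non-collinear interior points. This is exactly \Cref{main5:label}.

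The only point needing care is the check that $T_k$ really has $5$ boundary points and $k$ interior points, i.e.\ that $a=4$ is valid whenever $3\mid k$. This was already verified when $T_k$ was defined, so no real obstacle remains; the argument just assembles \Cref{notonetwo:label} and \Cref{tknocol:label}.
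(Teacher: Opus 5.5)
Your proposal is correct and follows the paper's proof: you split on whether $3\mid k$, use \Cref{notonetwo:label} for nonexistence when $3\nmid k$, and use the triangle $T_k$ together with \Cref{tknocol:label} when $3\mid k$. Re-checking the boundary and interior counts of $T_k$ via \Cref{seglatpntctr:label} and Pick's theorem is redundant but harmless, since the paper already does this when it defines $T_k$.
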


\begin{proof}
Given a fixed integer $k>0$, if $k$ is a multiple of $3$, the lattice triangle $T_k$ has $k$ interior points and $5$ boundary points, but its interior points are not collinear by \Cref{tknocol:label}. If $k$ is not a multiple of $3$, no triangles exist at all by \Cref{notonetwo:label}. 
\end{proof}

It immediately follows that there are no $B5$-collinear numbers, and we have shown \Cref{main5:label}. $\Box$

\section{Conclusion}
We have completely classified the integers that force the interior lattice points of a lattice triangle to be collinear in the cases $B(T)=4$ and $B(T)=5$. We proved that the only $B4$-collinear integers are $1, 2,$ and $5$, whereas \textit{no} $B5$-collinear integers exist. Together with the classification by Li and Paquin for $B(T)=3$, these results provide a complete picture for the first three non-trivial boundary cases.

The classifications reveal a contrast between four and five boundary lattice points. While four boundary lattice points force collinearity for a single nontrivial interior-point count, five boundary points provide geometric flexibility that prevents forced collinearity for any $k$. Whether this marks the beginning of a more general pattern remains an intriguing question.

Several natural questions arise for further research. The next case is $B(T)=6$: which integers, if any, are $B6$-collinear? More generally, can the $Bn$-collinear integers be classified for arbitrary $n$? Does a general pattern emerge as the number of boundary lattice points increases, particularly as $n \to \infty$? More broadly, can one characterize when the boundary structure of a lattice polygon beyond triangles forces rigid geometric configurations, such as collinearity or specific convex hull structures, of its interior lattice points? We hope that the methods developed here provide a useful foundation and inspiration for investigating these questions. 

\bibliographystyle{plain}
\bibliography{refs}

\end{document}